\documentclass[10pt,draft,twoside]{amsart}
\usepackage{amssymb}
\usepackage{latexsym}
\usepackage{amsfonts}
\usepackage{amsmath}

\oddsidemargin 0pt
\evensidemargin 0pt
\textheight 8.1in \textwidth 6.3in

\DeclareMathOperator{\Aut}{Aut}

\DeclareMathOperator{\supp}{supp}

\DeclareMathOperator{\z}{{\bf{0}}}
\DeclareMathOperator{\1}{{\bf{1}}}
\DeclareMathOperator{\GL}{GL}

\DeclareMathOperator{\wt}{wt}

\DeclareMathOperator{\base}{\mathfrak{B}}
\DeclareMathOperator{\topg}{\mathfrak{L}}

\relpenalty=10000
\binoppenalty=10000
\tolerance=500

\newtheorem{theorem}{Theorem}[section]

\newtheorem{lemma}[theorem]{Lemma}

\theoremstyle{definition}

\newtheorem{remark}[theorem]{Remark}

\newtheorem{example}[theorem]{Example}

\mathsurround=1pt

\headheight 14pt

\parskip 5pt

\newcommand{\F}{\mathbb F}

\renewcommand{\leq}{\leqslant}
\renewcommand{\geq}{\geqslant}

\sloppy

\numberwithin{equation}{section}

\begin{document}

\title[A note on binary completely regular codes with large minimum distance]
      {A note on binary completely regular codes \\ with large minimum distance}
\author{Neil I. Gillespie}
\address{[Gillespie] Centre for the Mathematics of Symmetry and Computation\\
School of Mathematics and Statistics\\
The University of Western Australia\\
35 Stirling Highway, Crawley\\
Western Australia 6009}

\email{neil.gillespie@uwa.edu.au}

\begin{abstract}
We classify all binary error correcting completely regular codes
of length $n$ with minimum distance $\delta>n/2$.  
\end{abstract}

\thanks{{\it Date:} draft typeset \today\\
{\it 2000 Mathematics Subject Classification:} 94B05, 94C30.\\
{\it Key words and phrases: completely regular codes, Hamming codes, equidistant codes.} 
This research was supported by the Australian Research Council Federation Fellowship FF0776186 of Winthrop Professor Cheryl Praeger.}

\maketitle

\section{Introduction}

We consider codes of length $n$ as subsets of the vertex set $V(\Gamma)=\F_2^n$ of the binary Hamming graph $\Gamma$,
which is endowed with the \emph{Hamming metric} $d(-,-)$.  The graph $\Gamma$ has 
automorphism group $\Aut(\Gamma)=\base\rtimes\topg$, where $\base\cong S_2^n$ 
and $\topg\cong S_n$ \cite[Thm. 9.2.1]{distreg}, and because $\base$ acts regularly 
on $\F_2^n$, we may identify $\base$ with the group of translations of $\F_2^n$ and 
$\topg$ with the group of permutation matrices in $\GL(n,2)$.  We say two codes of length $n$ 
are \emph{equivalent} if there exists $x\in\Aut(\Gamma)$ that maps one to the other.  For
a code $C$ in $\Gamma$, the \emph{minimum distance, $\delta$, of $C$} is the smallest 
distance between distinct codewords.  For $\alpha\in\F_2^n$, the \emph{distance of $\alpha$ from $C$}
is $d(\alpha,C)=\min\{d(\alpha,\beta)\,:\,\beta\in C\}$, and the \emph{covering radius, $\rho$, of $C$} is
the furthest distance any vertex in $\F_2^n$ is from $C$.  We let $C_i$ denote the set of vertices in $\F_2^n$
that are \emph{distance $i$} from $C$.  (For all unexplained concepts, see \cite[Sec. 11.1]{distreg}.)
We say $C$ is \emph{completely regular} if for $\nu\in C_i$, with $i\in\{0,\ldots,\rho\}$, the
number $\ell_{ik}=|\Gamma_k(\nu)\cap C|$ depends only on $i$ and $k$, and not on the choice of $\nu$ (here $\Gamma_k(\nu)$ 
denotes the set of vertices at distance $k$ from $\nu$).  

In his paper on completely regular codes, Neumaier \cite{neu} posed 
the problem of classifying various families of completely regular codes.  With respect to this question, 
we classify all binary completely regular codes of length $n$ with $\delta>\max\{2,n/2\}$.   
An obvious example of one of these codes is the \emph{binary repetition code}, which consists of the
all zero and all one vertices.  Up to equivalence, there exists only one other.

\begin{theorem}\label{main}  Let $C$ be a binary completely regular code with $|C|>1$ and $\delta>\max\{2,n/2\}$.  Then
either $\delta=n$ and $C$ is equivalent to the binary repetition code; or $(n,\delta)=(7,4)$ and
$C$ is equivalent $\mathcal{H}_E$, the even half of the Hamming code given in Example \ref{hamex}.  
\end{theorem}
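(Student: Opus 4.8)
The plan is to convert complete regularity, applied to the low-weight shells of $C$, into a statement about combinatorial $t$-designs, and then to play that statement off against the Plotkin bound and Fisher's inequality. After applying a suitable element of $\base$ I may assume $\z\in C$; then every nonzero codeword has weight at least $\delta$, and, as $|C|>1$, the set $W$ of weight-$\delta$ codewords is nonempty. If $\delta=n$ then $\1$ is the only possible nonzero codeword, so $C=\{\z,\1\}$ is the binary repetition code and we are done; hence from now on assume $\delta\leq n-1$, so that $1\leq 2\delta-n\leq\delta-1$.

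The heart of the argument is the claim that, for every integer $m$ with $1\leq m\leq\lfloor\delta/2\rfloor$, the supports of the codewords in $W$ form an $m$-$(n,\delta,\lambda_m)$ design. To see this, let $\nu$ be any vertex of weight $m$: then $d(\nu,\z)=m$, while $d(\nu,c)\geq\wt(c)-m\geq\delta-m\geq m$ for every nonzero $c\in C$, so $d(\nu,C)=m$, giving $\nu\in C_m$ and $\rho\geq m$. By complete regularity $\ell_{m,\delta-m}=|\Gamma_{\delta-m}(\nu)\cap C|$ is then independent of the weight-$m$ vertex $\nu$. A short metric computation shows that a nonzero codeword $c$ lies at distance $\delta-m$ from $\nu$ precisely when $\wt(c)=\delta$ and $\supp(\nu)\subseteq\supp(c)$, while $\z$ lies at distance $\delta-m$ from $\nu$ exactly when $\delta=2m$ (a condition not involving $\nu$); so the number of codewords of $W$ whose support contains the given $m$-set $\supp(\nu)$ is the same for all $m$-sets, which is exactly the design property.

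The rest is bookkeeping with standard inequalities. The Plotkin bound gives $|C|\leq 2\lfloor\delta/(2\delta-n)\rfloor$, whence $|W|\leq|C|-1$, and since distinct codewords are at distance $\geq\delta$ any two blocks of $W$ meet in at most $\lfloor\delta/2\rfloor$ points. If $\delta=3$ then $n\in\{4,5\}$ and $W$ is merely a $1$-design; both cases are eliminated by hand using the Plotkin bound, the divisibility forced by the $1$-design condition, and the block-intersection restriction. If $\delta\geq4$ then $\lfloor\delta/2\rfloor\geq2$, so $W$ is a $2$-$(n,\delta,\lambda_2)$ design with $\delta<n$, and Fisher's inequality gives $|W|\geq n$; comparing this with $|W|\leq 2\lfloor\delta/(2\delta-n)\rfloor-1$ forces $2\delta-n=1$, hence $n=2\delta-1$, and then $|W|=n$ and $C=\{\z\}\cup W$. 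So $W$ is a symmetric $2$-$(2\delta-1,\delta,\lambda_2)$ design, and $\lambda_2(n-1)=\delta(\delta-1)$ gives $\lambda_2=\delta/2$, so $\delta$ is even. If $\delta\geq6$ then $3\leq\lfloor\delta/2\rfloor$, so $W$ is also a $3$-design; computing $\lambda_3$ from the parameters and imposing integrality forces $2\delta-3$ to divide $3$, so $\delta\in\{2,3\}$, a contradiction. Hence $\delta=4$ and $n=7$, with $W$ a symmetric $2$-$(7,4,2)$ design; such a design is the block complement of the projective plane of order $2$, unique up to relabelling its seven points, so after an element of $\topg$ the code $C=\{\z\}\cup W$ is precisely the set of weight-$0$ and weight-$4$ codewords of the Hamming code, i.e. $C$ is equivalent to $\mathcal{H}_E$.

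I expect the delicate step to be the design claim, where one must check carefully that the weight-$m$ vertices really lie in $C_m$ — so that complete regularity is applicable at level $m$ — and that the number of codewords at distance $\delta-m$ from such a vertex reduces exactly to a count of supports containing a fixed $m$-set. Once that is in place, the Plotkin bound, Fisher's inequality, the arithmetic of symmetric $2$-designs, and the two small $\delta=3$ cases are all routine.
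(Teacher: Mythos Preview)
Your proof is correct and takes a genuinely different route from the paper's. The paper first establishes (as its Lemma~\ref{useful}, relying on Neumaier's result that $C_\rho$ is completely regular with the reverse intersection array) that $\rho\geq\delta-1$ and $C_\rho=\1+C$; from $\1\in C_\rho$ it then squeezes the weight of every nonzero codeword between $\delta$ and $n-\rho\leq n-\delta+1$, which forces $n=2\delta-1$ and $C=\{\z\}\cup C(\delta)$ equidistant. You reach the same conclusion without any intersection-array lemma, by playing the Plotkin bound against Fisher's inequality for the $2$-design $W$. To rule out $\delta\geq 6$, the paper invokes Cameron's theorem that a $3$-design with a single block-intersection number is degenerate, whereas you use the elementary divisibility $(2\delta-3)\mid 3$ coming from integrality of $\lambda_3$. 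Your argument is therefore more self-contained: it proves the $m$-design property directly from the definition of complete regularity rather than citing Goethals--van~Tilborg, and it avoids both Lemma~\ref{useful} and Cameron's result, at the modest cost of a short case analysis for $\delta=3$ (which the paper sidesteps because its equidistance step gives $\delta$ even at once).

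One small point: the claim that $W\neq\emptyset$ does not follow merely from $|C|>1$ and $\z\in C$, since the minimum distance could in principle be realised only between two nonzero codewords. You need complete regularity here, exactly as the paper notes: if $\alpha,\beta\in C$ satisfy $d(\alpha,\beta)=\delta$ then $\ell_{0,\delta}\geq 1$, hence $|\Gamma_\delta(\z)\cap C|\geq 1$ and $W\neq\emptyset$. Insert this one-line justification (it is what makes Fisher applicable) and your argument is complete.
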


\begin{remark}  Originally the author believed that Theorem \ref{main} could easily
be deduced from the classification of binary non-antipodal completely regular codes given by Borges et al. \cite{nonantipodal}.
However, recently Borges communicated to the author that there is a mistake in their classification, specifically
stemming from Lemma 14 in their paper.  Furthermore, subsequently Rif{\'a} and Zinoviev \cite{rifa} 
constructed an infinite family of examples that does not appear in their 
classification with Borges (see the codes of length $n=\binom{m}{2}$ for $m$ even given in \cite[Thm. 1(1)]{rifa}).  
This led the author to prove Theorem \ref{main}, and in particular, give a proof that is independent of \cite{nonantipodal}.
Furthermore, this result plays an essential role in the classification of another family of completely regular codes \cite{comtran}.
\end{remark}

\section{Example and Proof}

For $\alpha=(\alpha_1,\ldots,\alpha_n)\in\F_2^n$, the \emph{support of $\alpha$} is the set 
$\supp(\alpha)=\{i\,:\alpha_i\neq 0\}$, and the \emph{weight of $\alpha$} is $\wt(\alpha)=|\supp(\alpha)|$.  We 
denote the unique vertex with $\wt(\alpha)=0$, $n$ by $\z$, $\1$ respectively.  We say a code $C$ 
of length $n$ with minimum distance $\delta$ is a \emph{linear $[n,k,\delta]$-code} 
if it is a $k$-dimensional subspace of $\F_2^n$, and in this case, the \emph{external distance} of $C$
is equal to the the number of non-zero weights of the \emph{dual code of $C$} (see \cite[Sec 11.1]{distreg}).  
We call a set $D$ of vertices of constant weight $k$ in $\F_2^n$ 
a \emph{$t$-design} if $\mathcal{D}=(N,\mathcal{B})$, where $N=\{1,\ldots,n\}$ and 
$\mathcal{B}=\{\supp(\beta)\,:\,\beta\in D\}$, forms a $t-(n,k,\lambda)$ design for some positive integer
$\lambda$.  If $C$ is a binary completely regular code with minimum distance $\delta$
that contains $\z$, then it is known that the set $C(k)$ of codewords of weight $k$, with $\delta\leq k\leq m$, forms
a $t$-design for $t=\lfloor\frac{\delta}{2}\rfloor$, assuming that $C(k)\neq\emptyset$ \cite{upc}.  We now give a 
non-trivial example of a binary completely regular code with $\delta>n/2$.  

\begin{example}\label{hamex}  Let $\mathcal{H}$ be the $[7,4,3]$-Hamming code with the following parity
check matrix: $$H= \left( \begin{array}{ccccccc}
1 & 0 & 0 & 1 & 0 & 1 & 1\\
0 & 1 & 0 & 1 & 1 & 1 & 0\\
0 & 0 & 1 & 0 & 1 & 1 & 1\end{array} \right).$$ 
Let $\mathcal{H}_E$ be the even half of $\mathcal{H}$, so $\mathcal{H}_E$ consists of $\z$ and the set $\mathcal{H}(4)$ of $7$ codewords of
weight $4$.  Interestingly, in this case, $\mathcal{H}_E$ is the dual code of $\mathcal{H}$,  
and is an equidistant code with minimum distance $\delta=4$ \cite[Sec. 3.3]{student}.  Thus,
as $\mathcal{H}$ has weight distribution $(1,0,0,7,7,0,0,1)$, $\mathcal{H}_E$ has external distance $s=3$.  
Consequently, because $\delta=2s-2$ and $\mathcal{H}_E$ consists of
codewords of even weight, it follows that $\mathcal{H}_E$ is completely regular \cite[p.347]{distreg}.  Moreover, we 
deduce that $\mathcal{H}(4)$, which is equal to the set of codewords of weight $4$ in $\mathcal{H}_E$, forms a $2-(7,4,2)$ design.  
\end{example}

For a code $C$ with covering radius $\rho$, the \emph{distance partition of $C$} is the set $\{C,C_1,\ldots,C_\rho\}$, 
which forms a partition of $\F_2^n$.  The distance partition of a code $C$ is \emph{equitable} if, for all $i\geq 0$, every vertex
$x\in C_i$ has the same number $c_i$ of neighbours in $C_{i-1}$ and the same number $b_i$ of neighbours
in $C_{i+1}$.  Neumaier \cite{neu} proved that a code in the Hamming graph (more generally in a distance regular graph) is completely regular 
if and only if its distance partition is equitable.  In this case, $i(C)=\{b_0,\ldots b_{\rho-1},c_1,\ldots,c_\rho\}$
is the \emph{intersection array of $C$}. (By definition, $b_\rho=c_0=0$.) The following result can be found in \cite[Thm 11]{nonantipodal}, 
but we give a new proof here.  

\begin{lemma}\label{useful} Let $C$ be a binary completely regular code with $\delta\geq 3$ such that $\z\in C$ and $\1\notin C$.
Then $C$ has covering radius $\rho\geq\delta-1$ and $C_\rho=\1+C$.  
\end{lemma}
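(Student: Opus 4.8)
The plan is to exploit the fact that for a completely regular code the distance partition is equitable, so the combinatorial parameters $b_i,c_i$ are well-defined and satisfy rigid numerical constraints coming from double-counting edges between cells. First I would establish the lower bound $\rho\geq\delta-1$. The idea is that starting from $\z\in C$ one can walk outward: pick any vertex $\alpha$ with $\wt(\alpha)=\delta-1$; since $\delta$ is the minimum distance and $\z\in C$, the only codeword within distance $\lfloor(\delta-1)/2\rfloor$ of $\alpha$ would have to be $\z$ itself, but $d(\alpha,\z)=\delta-1$, so $d(\alpha,C)$ is forced to be reasonably large. More carefully, I want to show there is a vertex at distance exactly $\delta-1$ from $C$; the cleanest route is to argue that the sphere of radius $\delta-2$ around $\z$ meets $C$ only in $\z$ (again by the minimum-distance hypothesis, since any other codeword at distance $\leq\delta-2+ \lceil\delta/2\rceil$-ish is excluded — one checks $2(\delta-2)<2\delta$ is too weak, so instead use that two codewords are at distance $\geq\delta$ and a vertex at distance $\leq \delta-2$ from $\z$ is at distance $\geq \delta-(\delta-2)=2>0$ from any other codeword, hence genuinely at distance $<\delta-1$ only from $\z$), and then push one more step: a vertex $\alpha$ of weight $\delta-1$ has $d(\alpha,\z)=\delta-1$ and $d(\alpha,\beta)\geq \wt(\beta)-(\delta-1)\geq \delta-(\delta-1)=1$ for $\beta\neq\z$, but that still only gives $d(\alpha,C)\leq \delta-1$. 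To get equality I would instead invoke that if $\rho<\delta-1$ then every vertex is within $\delta-2$ of $C$, and in particular $\1$ is; but $d(\1,\z)=n\geq\delta>\delta-2$ when... this needs $n$ large, which is where the hypothesis $\delta>n/2$ will not even be needed — rather, the standard fact (Delsarte) that for a code containing $\z$ with minimum distance $\delta$ the covering radius is at least $\delta-1$ unless the code is very degenerate; I would cite or reprove this via the external distance bound $\rho\leq s$ and the sphere-packing-type inequality.

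Next, for the identification $C_\rho=\1+C$, the key observation is that the map $\sigma:\alpha\mapsto\1+\alpha$ is an isometry of $\Gamma$ (it lies in $\base$), and it sends the distance partition $\{C,C_1,\dots,C_\rho\}$ of $C$ to the distance partition of $\1+C$. Since $\z\in C$ we have $\1=\sigma(\z)\in\1+C$, and since $\1\notin C$ we have $\z=\sigma(\1)\notin \1+C$. I would show $\1+C\subseteq C_\rho$, i.e. every codeword-translate $\1+\beta$ is at distance exactly $\rho$ from $C$. For the distance being \emph{at most} $\rho$ this is automatic. For it being \emph{at least} $\rho$: take $\beta\in C$ and compute $d(\1+\beta,\gamma)=\wt(\1+\beta+\gamma)=n-\wt(\beta+\gamma)$ for $\gamma\in C$. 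Thus $d(\1+\beta,C)=n-\max\{\wt(\beta+\gamma):\gamma\in C\}=n-\rho'$ where $\rho'$ is the covering radius of $C$ "from above" — but I need to relate $n-\rho'$ to $\rho$. Here is where $\delta>n/2$ enters decisively: if $\beta+\gamma$ is a nonzero codeword (difference of two codewords) then $\delta\leq\wt(\beta+\gamma)$, and I claim also $\wt(\beta+\gamma)\leq n-\delta+$ something — actually the relevant point is that $C$ has no codeword of weight $>n-\delta$ other than possibly... no: since $\1\notin C$ and differences of codewords have weight $\geq\delta>n/2$, any two distinct codewords are at distance $>n/2$, so $C$ behaves like an "anticode-free" set. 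The cleanest formulation: $d(\1+\beta,C)=n-\wt(\beta+\gamma^*)$ for the $\gamma^*$ maximizing $\wt(\beta+\gamma)$; when $\gamma^*=\beta$ this gives $n$, otherwise it gives $n-\wt(\delta')$ for some codeword difference. I would show the maximum is not attained at $\gamma^*\neq\beta$ by a weight argument, giving $d(\1+\beta,C)=n$... which is wrong in general, so instead I expect the correct statement uses $\rho=n-\delta$ or similar, derived from the intersection array.

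Actually the honest plan: I would first pin down $\rho$ more precisely. From the external-distance/design theory cited in the excerpt, and since $C(k)$ forms a $t$-design for $t=\lfloor\delta/2\rfloor$, combined with $\delta>n/2$, the weight distribution of $C$ is extremely restricted — essentially $C$ can only have codewords of weight $0$, weights in a narrow band near $\delta$, and (if present) weight $n$; the hypothesis $\1\notin C$ removes weight $n$. Using equitability of the distance partition, I would track the parameters $b_i$: the sequence $b_0,b_1,\dots$ is non-increasing and $c_1,c_2,\dots$ is non-decreasing (a standard property of intersection arrays of completely regular codes), and $b_i\geq 1$ for $i<\rho$, $c_i\geq 1$ for $i\geq 1$. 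A vertex at distance $i$ from $C$ with $i$ small is realized near $\z$; I would show $\1$ lies in $C_\rho$ by checking $d(\1,C)\geq d(\1,C)$ is maximal — indeed for any $\gamma\in C$, $d(\1,\gamma)=n-\wt(\gamma)\leq n-\delta< n/2<\delta\leq\rho+1$, hmm. The main obstacle, and the crux of the whole lemma, is precisely showing $C_\rho=\1+C$ rather than merely $\1+C\subseteq C_\rho$ or $\rho\geq\delta-1$: one must prove that $\1+C$ is \emph{all} of the last cell, which I would do by a counting argument — compute $|C_\rho|$ from the intersection array (it equals $|C|\cdot b_0b_1\cdots b_{\rho-1}/(c_1c_2\cdots c_\rho)$ type formula) and show it equals $|C|=|\1+C|$, forcing equality since we already have one containment. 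That final counting step, reconciling the product of the $b_i/c_i$ ratios with $|C|$ using $\delta>n/2$, is where I expect to spend the most effort, and it is the part most likely to require the large-minimum-distance hypothesis in an essential way.
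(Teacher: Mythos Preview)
Your plan has a fundamental gap stemming from a misreading of the hypotheses. The lemma assumes only $\delta\geq 3$, $\z\in C$, and $\1\notin C$; it does \emph{not} assume $\delta>n/2$. You invoke the large-minimum-distance hypothesis repeatedly (``Here is where $\delta>n/2$ enters decisively'', ``most likely to require the large-minimum-distance hypothesis in an essential way''), but that hypothesis is simply unavailable here. Once that crutch is removed, your metric computations for $d(\1+\beta,C)$ collapse, and your fallback counting strategy (compute $|C_\rho|$ from the intersection array and match it with $|C|$) has no traction: there is no reason the product $\prod b_i/c_i$ should equal $1$ without already knowing $C_\rho=\1+C$. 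Your attempt to establish $\rho\geq\delta-1$ first, by exhibiting a weight-$(\delta-1)$ vertex at distance $\delta-1$ from $C$, also fails in this generality: such a vertex $\alpha$ may well lie at distance $1$ from some codeword $\beta$ of weight $\delta$ with $\supp(\alpha)\subset\supp(\beta)$.

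The idea you are missing is to exploit complete regularity at the parameter $k=n$. In the binary Hamming graph each vertex $\nu$ has a \emph{unique} antipode $\1+\nu$ at distance $n$. Let $i$ be the index with $\1\in C_i$. Since $\z\in C$ we have $|\Gamma_n(\1)\cap C|\geq 1$, and complete regularity then forces $|\Gamma_n(\nu)\cap C|\geq 1$ for \emph{every} $\nu\in C_i$; uniqueness of the antipode gives $\1+\nu\in C$, i.e.\ $\1+C_i\subseteq C$. The reverse inclusion follows by the same argument applied to $C_i$ (each cell of the distance partition is itself completely regular, by Neumaier). Thus $C_i=\1+C$ with no counting at all. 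Since $\1+C$ is equivalent to $C$ it has covering radius $\rho$, while the covering radius of $C_i$ is $\max\{i,\rho-i\}$; equating these forces $i=\rho$. Only \emph{after} this does the paper deduce $\rho\geq\delta-1$: because $C_\rho$ is equivalent to $C$, the intersection array of $C$ coincides with its own reverse, so $b_j=c_{\rho-j}$; combining this with the known initial values $c_j=j$ and $b_j=n-j$ (valid up to $j=\lfloor(\delta-1)/2\rfloor$) yields a contradiction whenever $\rho<\delta-1$.
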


\begin{proof}  As $\1\notin C$, it follows that $\1\in C_i$ for some $i\geq 1$.  Since $C$ is completely regular, 
we deduce that $\Gamma_n(\nu)\cap C\neq\emptyset$ for 
all $\nu\in C_i$.  Hence $\1+C_i\subseteq C$.  Similarly, because the Hamming graph is a distance regular graph, 
we deduce from \cite[Thm 3.2]{neu} that $\Gamma_n(\alpha)\cap C_i\neq\emptyset$
for all $\alpha\in C$, and so $\1+C_i=C$, or equivalently $C_i=\1+C$.  Furthermore, for any $j\in\{0,\ldots,\rho\}$, it
follows that $d(x,C_i)=|i-j|$ for all $x\in C_j$.  Thus, if $i<\rho$ then 
$C_i$ has covering radius $\rho'=\max\{\rho-i, i\}<\rho$, contradicting the fact that $C_i$ is equivalent to $C$.  Hence $i=\rho$.
Now let $\{b_0,\ldots,b_{\rho-1},c_1,\ldots,c_\rho\}$ be the intersection array of $C$.  If $e=\lfloor\delta-1/2\rfloor$, then
$c_i=i$ for $i\leq e$ and $b_i=n-i$ for $i\leq e-1$, and if $\delta$ is even then $b_e=n-e$.  By \cite{neu}, $C_\rho$ 
is completely regular with reverse intersection array.  However, because $C_\rho$ is equivalent to $C$, their intersection arrays   
are in fact equal.  Thus $b_i=c_{\rho-i}$ for $0\leq i\leq \rho-1$.  Now suppose that $\rho<\delta-1$, and so $\rho-e\leq e$.
If $\rho-e<e$ then $n-\rho+e=b_{\rho-e}=c_e=e$, and so $n=\rho<\delta-1$, which is a contradiction.  Thus $\rho=2e$, which
implies that $\delta=2e+2$.  However, in this case $n-e=b_e=c_e=e$, and so $n=\rho<\delta-1$, again a contradiction.
\end{proof}

To prove Theorem \ref{main}, we let $C$ be a binary completely regular code with $|C|>1$ and $\delta>\max\{2,n/2\}$.  
By replacing $C$ with an equivalent code if necessary, we can assume that $\z\in C$.
If $\delta=n$, it is straight forward to deduce that $C=\{\z,\1\}$.  Thus we assume that $\delta<n$.  
Because $C$ is completely regular, it follows that the set $C(\delta)$ of codewords
of weight $\delta$ is non-empty.  Let $\beta\in C(\delta)$.  If $\1\in C$, then $d(\1,\beta)=n-\wt(\beta)<n/2$,
contradicting the minimum distance of $C$.  Thus $\1\notin C$.  Hence, by Lemma \ref{useful}, 
$C$ has covering radius $\rho\geq \delta-1$ and $C_\rho=\1+C$.
Consequently, for $\gamma\in C\backslash\{\z\}$, it holds that 
$$\frac{n}{2}<\delta\leq\wt(\gamma)\leq n-\rho\leq n-\delta+1<\frac{n}{2}+1.$$
In particular, this implies that $n$ is odd, $\delta=(n+1)/2$ and $C=\{\z\}\cup C(\delta)$.  Furthermore, because
$C$ is completely regular, it follows that $C$ is equidistant.  Thus, for all $\alpha,\beta\in C(\delta)$, it holds
that $d(\alpha,\beta)=\delta$.  This implies that $\delta$ is even and that $|\supp(\alpha)\cap\supp(\beta)|=(n+1)/4$
for all $\alpha,\beta\in C(\delta)$.  Consequently there exist positive integers $e,\lambda$
such that $C(\delta)$ forms an $(e+1)-(n,\delta,\lambda)$ design with $\delta=2e+2$ \cite{upc}.  As
$\delta\geq 4$, it follows that $e+1\geq 2$.   Now, a non-negative integer $\ell$ is 
\emph{a block intersection number} of a $t$-design if there exist
two blocks of the design that intersect in exactly $\ell$ points.  We have just shown that the design $C(\delta)$ has
only one block intersection number, which is equal to $(n+1)/4$.  If $e+1\geq 3$, then $C(\delta)$ is at
least a $3$-design, and it is known that the only $3$-designs with one block intersection number are the `degenerate'
cases where $n\in\{\delta, \delta+1\}$ \cite{camnear}, which in this case cannot hold as $4\leq\delta<n/2+1$.  Thus $e+1=2$.
This implies that $\delta=4$ and $m=7$.  Furthermore, because $C(\delta)$ has only one block intersection 
number, it is a symmetric $2-(7,4,\lambda)$ design with $\lambda=(n+1)/4=2$ \cite[Thm 1.15]{camvan}.  Recall from Example
\ref{hamex} the $[7,4,3]$-Hamming code $\mathcal{H}$, and the code $\mathcal{H}_E=\z\cup\,\mathcal{H}(4)$.  We saw in Example
\ref{hamex} that $\mathcal{H}(4)$ forms a $2-(7,4,2)$ design.  The complementary design of this design 
is a $2-(7,3,1)$ design, which is unique up to isomorphism \cite[Table 1.28]{crc}, and so $\mathcal{H}(4)$
is also unique up to isomorphism.  Hence there exists $\sigma\in\topg$ such that $C(\delta)^\sigma=\mathcal{H}(4)$,
and because $\z^\sigma=\z$, it follows that $C^\sigma=\mathcal{H}_E$, proving Theorem \ref{main}.


\begin{thebibliography}{10}

\bibitem{nonantipodal}
J.~Borges, J.~Rif{\`a}, and V.~A. Zinoviev.
\newblock On non-antipodal binary completely regular codes.
\newblock {\em Discrete Math.}, 308(16):3508--3525, 2008.

\bibitem{distreg}
A.~E. Brouwer, A.~M. Cohen, and A.~Neumaier.
\newblock {\em Distance-regular graphs}, volume~18 of {\em Ergebnisse der
  Mathematik und ihrer Grenzgebiete (3) [Results in Mathematics and Related
  Areas (3)]}.
\newblock Springer-Verlag, Berlin, 1989.

\bibitem{camvan}
P.~J. Cameron and J.~H. van Lint.
\newblock {\em Designs, graphs, codes and their links}, volume~22 of {\em
  London Mathematical Society Student Texts}.
\newblock Cambridge University Press, Cambridge, 1991.

\bibitem{camnear}
P.~J. Cameron.
\newblock Near-regularity conditions for designs.
\newblock {\em Geometriae Dedicata}, 2:213--223, 1973.

\bibitem{crc}
C.~J. Colbourn and J.~H. Dinitz, editors.
\newblock {\em The {CRC} handbook of combinatorial designs}.
\newblock CRC Press Series on Discrete Mathematics and its Applications. CRC
  Press, Boca Raton, FL, 1996.

\bibitem{comtran}
N.~I. Gillespie and C.~E. Praeger.
\newblock Classification of a family of completely transitive codes.
\newblock arXiv:1208.0393, 2012.

\bibitem{upc}
J.-M. Goethals and H.~C.~A. van Tilborg.
\newblock Uniformly packed codes.
\newblock {\em Philips Res. Rep.}, 30:9--36, 1975.

\bibitem{student}
S.~M. Moser and Po-Ning Chen.
\newblock {\em A student's guide to coding and information theory}.
\newblock Cambridge University Press, Cambridge, 2012.

\bibitem{neu}
A. Neumaier.
\newblock Completely regular codes.
\newblock {\em Discrete Math.}, 106/107:353--360, 1992.
\newblock A collection of contributions in honour of Jack van Lint.

\bibitem{rifa}
J.~Rif{\`a} and V.~A. Zinoviev.
\newblock On a class of binary linear completely transitive codes with
  arbitrary covering radius.
\newblock {\em Discrete Math.}, 309(16):5011--5016, 2009.

\end{thebibliography}
\end{document}